\theoremstyle{plain} 
\theoremstyle{plain} 
\theoremstyle{plain} \newtheorem{theor}{Theorem}[section] 
\theoremstyle{plain} 
\theoremstyle{plain} \newtheorem*{corol}{Corollary} 
\theoremstyle{remark} 
\theoremstyle{plain} 
\theoremstyle{remark}
\newcommand\CROSS[1]{%
  \hbox{%
    \vbox{
      \hrule
      \kern2.5pt
      \hbox{$#1$\,\strut}
    }%
  \vrule
  }\mskip\thickmuskip
}
\begin{document}
\begin{center}
\Large{\textbf{On the characteristic function of a collection of sets}}\\ 
~\\

\large{Vladimir Garc\'{\i}a-Morales}\\

\normalsize{}
~\\

Departament de Termodin\`amica, Universitat de Val\`encia, \\ E-46100 Burjassot, Spain
\\ garmovla@uv.es
\end{center}
\small{The union of a collection of $n$ sets is generally expressed in terms of a characteristic (indicator) function that contains $2^{n}-1$ terms. In this article, a much simpler expression is found that requires the evaluation of $n$ terms only. This leads to a major simplification of any normal form involving characteristic functions of sets. The formula can be useful in recognizing inclusion-exclusion patterns of combinatorial problems.}
\noindent  
~\\

\pagebreak


\section{Introduction}
The characteristic function is a useful means to define a set  \cite{Whitney1, Brown, Kung, Galambos} and allows the formulas of the algebra of logic to be expressed as equations of ordinary algebra \cite{Whitney1}. Let $U$ denote the universe of discourse and let $x\in U$ be one of its elements. Let $A \subseteq U$ be a set. The characteristic function of $A$, $\text{Set}(x; A): U\to \{0,1\}$, is defined as
\begin{align}\label{charfu}
\begin{split}
\text{Set}\left(x; A\right)={\begin{cases} \text1&{\text{if }}x \in A \\ 0 &{\text{if }}x \notin A  \end{cases}}
\end{split}
\end{align}
Obviously, $\text{Set}\left(x; U\right)=1$. Let $B \subseteq U$ be another set. The characteristic function $\text{Set}\left(x; A\cap B \right)$ of the intersection $A\cap B$ is the product of the individual characteristic functions of the sets $A$ and $B$
\begin{align}\label{charfu}
\begin{split}
\text{Set}\left(x; A\cap B \right)=\text{Set}\left(x; A\right)\text{Set}\left(x; B\right)
\end{split}
\end{align}
If $A$ and $B$ are \emph{disjoint}, their intersection is equal to the empty set $A\cap B=\emptyset$ and the characteristic function of the union $A \cup B$ of $A$ and $B$ is given by
\begin{align}\label{disjo}
\begin{split}
\text{Set}\left(x; A\cup B  \right)=\text{Set}\left(x; A\right)+\text{Set}\left(x; B\right) 
\end{split}
\end{align}
If $A$ and $B$ are not disjoint, Eq. (\ref{disjo}) counts twice the elements in $A\cap B$ and it is necessary to correct this overcount by subtracting once the contribution of the intersection
\begin{align}\label{uni2}
\begin{split}
\text{Set}\left(x; A\cup B  \right)=\text{Set}\left(x; A\right)+\text{Set}\left(x; B\right)-\text{Set}\left(x; A\cap B  \right) 
\end{split}
\end{align}
If one considers the union of three sets, we have, consequently
\begin{align}\label{uni3}
\begin{split}
\text{Set}\left(x; A\cup B\cup C  \right)&=\text{Set}\left(x; A\right)+\text{Set}\left(x; B\right)+\text{Set}\left(x; C\right)\\
&-\text{Set}\left(x; A\cap B  \right)-\text{Set}\left(x; A\cap C  \right)-\text{Set}\left(x; B\cap C  \right) \\
&+\text{Set}\left(x; A\cap B\cap C  \right) 
\end{split}
\end{align}

An induction argument, employing Eq. (\ref{uni2}) iteratively, shows that the characteristic function $\text{Set}\left(x; \bigcup_{j=1}^n S^{(j)}\right)$ of the union of $n$ sets $S^{(1)}$, $\ldots$, $S^{(n)}$ is given by
\begin{equation}
 \text{Set}\left(x; \bigcup_{j=1}^n S^{(j)}\right)  = \sum_{k = 1}^{n} (-1)^{k+1} \sum_{1 \leq j_{1} < \cdots < j_{k} \leq n} \text{Set}\left(x; \bigcap_{m=1}^k S^{(j_{m})}\right)  \label{inextrape}
\end{equation}
where 
\begin{equation}
\text{Set}\left(x; \bigcap_{m=1}^k S^{(j_{m})}\right)=\prod_{m=1}^{k}\text{Set}\left(x; S^{(j_{m})}\right) \label{interse}
\end{equation}
is the characteristic function of the intersection of $k$ sets. Eq. (\ref{inextrape}) corresponds to \emph{Whitney's first normal form} \cite{Whitney1} for the characteristic function of the union of $n$ sets. 
Eq. (\ref{inextrape}) is closely related to the principle of inclusion-exclusion, which is of major importance in enumerative combinatorics \cite{Rota, Stanley, Aigner2} (see \cite{BonaEC} for an introductory treatment), probability theory \cite{Galambos}, computation \cite{Bjorklund} and graph theory \cite{Whitney2}. Indeed, the inclusion-exclusion principle is swiftly derived from Eq. (\ref{inextrape}) as follows. We note that, from Eq. (\ref{inextrape}), the set of elements $x$ that do \emph{not} belong to the union of the $m$ sets has characteristic function given by
\begin{eqnarray}
 \text{Set}\left(x; U / \bigcup_{j=1}^n S^{(j)}\right) & =& \text{Set}\left(x; U\right)-\text{Set}\left(x; \bigcup_{j=1}^n S^{(j)}\right) \nonumber \\
 &=& 1-\sum_{k = 1}^{n} (-1)^{k+1} \sum_{1 \leq j_{1} < \cdots < j_{k} \leq n} \text{Set}\left(x; \bigcap_{m=1}^k S^{(j_{m})}\right)  \nonumber \\ 
&=& \sum_{k = 0}^{n} (-1)^{k} \sum_{1 \leq j_{1} < \cdots < j_{k} \leq n} \text{Set}\left(x; \bigcap_{m=1}^k S^{(j_{m})}\right)  \label{inextrape2}
\end{eqnarray}
Let us define the cardinal (size) $\text{Card}(A)$ of a finite set $A$ as
\begin{equation}
\text{Card}(A)=\sum_{x\in U} \text{Set}\left(x; A\right) \label{cardinal}
\end{equation}
If we assume that all sets entering in Eq. (\ref{inextrape2}) are finite, we then have, from Eqs. (\ref{inextrape2}) and Eq. (\ref{cardinal})
\begin{eqnarray}
 \text{Card}\left(U / \bigcup_{j=1}^n S^{(j)}\right) & =& \sum_{k = 0}^{n} (-1)^{k} \sum_{1 \leq j_{1} < \cdots < j_{k} \leq n} \text{Card}\left(\bigcap_{m=1}^k S^{(j_{m})}\right)  \label{inextrape3}
\end{eqnarray}
which is the celebrated inclusion-exclusion principle \cite{Aigner2}. Eq. (\ref{inextrape3}) gives the size of the set whose elements belong to \emph{none} of the sets $S^{(j)}$, $j=1, \ldots, m$. 

We note the following striking fact. While the intersection of $n$ sets only requires the product of their characteristic functions (i.e. the product of $n$ terms), the high degree of complexity of Eq. (\ref{inextrape}) is remarkable, since it contains a total of $2^{n}-1$ terms. This is easily checked for Eqs. (\ref{uni2}) and (\ref{uni3}) above (both specific instances of Eq. (\ref{inextrape})) whose right hand sides contain, respectively, $2^{2}-1=3$ and $2^{3}-1=7$ terms.  The complexity of this formula is caused by the term $-\text{Set}\left(x; A\cap B  \right)$ in Eq. (\ref{uni2}), which makes the pairwise union of two sets to depart from the mere ordinary addition of their characteristic functions. Since, as a consequence of the postulates of set theory, any arbitrary non-empty set can be decomposed into a combination of unions and intersections of sets of equal or lower size, the question whether a simpler formula for the characteristic function of the union of $n$ sets exists is an interesting one and may find applications in all branches of mathematics. Usually, the discovery of inclusion-exclusion patterns is associated to major advances towards the solution of subtle combinatorial problems. It is to be noted, however, that it is generally very hard to recognize them. This difficulty is associated to the complexity of the inclusion-exclusion principle, that is directly inherited from the complexity of Eq. (\ref{inextrape}), as described above. For example, it took fifty-five years \cite{Rota}, since Cayley's former attempts, before Jacques Touchard in 1934 could discover an inclusion-exclusion pattern for the m\'enage problem and, thence, readily obtain the solution to the problem as an explicit binomial formula \cite{Rota}. 

In this article, we establish a normal form for the characteristic function of the union of $n$ sets, see Eq. (\ref{mainU}) below, that has a simple geometric interpretation and is simpler than Eq. (\ref{inextrape}) in the sense that \emph{it only requires the evaluation of $n$ terms, i.e. the individual characteristic functions $\text{\emph{Set}}\left(x; S^{(j)}\right)$, $1\le j \le n$ of each of the $n$ sets}. This \emph{polynomial} dependence on $n$, the number of sets in the collection, strikingly contrasts with the \emph{exponential} dependence on $2^{n}-1$ terms of Whitney's normal form, Eq. (\ref{inextrape}). Our expression considerably simplifies the description of the union of $n$ arbitrary sets (and, of course, the addition of further sets to the union). Once understood, it is trivial to write down any specific form of our expression in any mathematical problem where the union of certain sets is involved, and we believe that it may be of help to combinatorialists in the discovery of inclusion-exclusion patterns. 

The outline of this article is as follows. In Section \ref{mathprembfun} we concisely present the main mathematical structure, the $\mathcal{B}$-function, that is required to derive and understand our expression. We also briefly sketch how this function can be used as a building block to construct characteristic functions of arbitrary sets (which can be numerable or not).  In Section \ref{core} we state our main result as a theorem and prove it, together with several corollaries. The theorem provides a toolbox for the analysis of arbitrarily complex sets and a means to join or decompose them into parts, no matter how intricate the intersections can be. In Section \ref{discu} we discuss the theorem, giving examples of it, as well as its implications and its geometric underpinnings.

\section{Mathematical prerequisite: The $\mathcal{B}$-function} \label{mathprembfun}

In previous works we have introduced a $\mathcal{B}$-function in the discussion of rule-based dynamical systems, as cellular automata \cite{VGM1,VGM2,VGM3,JPHYSA} and substitution systems \cite{VGM4}. Since the $\mathcal{B}$-function is crucial in the next section (in both stating and proving our main result) we briefly discuss it here. For arbitrary $x, y \in \mathbb{R}$ the $\mathcal{B}$-function is defined by the relationship
\begin{equation}
\mathcal{B}(x,y)=\frac{1}{2}\left(\frac{x+y}{|x+y|}-\frac{x-y}{|x-y|}\right)=\frac{1}{2}\left(\text{sign}(x+y)-\text{sign}(x-y)\right)={\begin{cases} \text{sign}(y)&{\text{if }}|x| < |y|\\ \text{sign}(y)/2 &{\text{if }}|x|=|y|, y\ne 0 \\0& {\text{otherwise}} \end{cases}} \label{d1}
\end{equation}
and has the form of a rectangular function \cite{JPHYSA} whose thickness is controlled by the value of $y$ and whose height can be positive or negative, depending on the sign of $y$. 

If $x\in \mathbb{Z}$ is constrained to be an integer and $y \in \left.\left(0,\frac{1}{2}\right.\right]$, $y\in \mathbb{R}$, the $\mathcal{B}$-function, Eq. (\ref{d1}), becomes the Kronecker delta function of its first argument
\begin{equation}
\mathcal{B}(x,y)=\delta_{x0}={\begin{cases} 1&{\text{if }}x =0, \qquad \qquad \quad \quad  x\in \mathbb{Z}, y \in \left.\left(0,\frac{1}{2}\right.\right], y \in \mathbb{R} \\ 0 &{\text{if }}x\ne 0 \qquad \qquad \quad \quad \ \ x\in \mathbb{Z}, y \in \left.\left(0,\frac{1}{2}\right.\right], y \in \mathbb{R} \end{cases}} \label{d1Kro}
\end{equation}
and, therefore, if $m, n \in \mathbb{Z}$, then $\mathcal{B}\left(m-n,\varepsilon \right)=\mathcal{B}\left(m-n,\frac{1}{2}\right)=\delta_{mn}$ for any real $\varepsilon$ such that $\varepsilon \in \left.\left(0,\frac{1}{2}\right.\right]$. Also clear is the fact that, if both $x$ and $y$ are non-negative integers, we have
\begin{equation}
\mathcal{B}(x,y)={\begin{cases} 1&{\text{if }}x < y, \qquad \qquad \quad \quad \ \ x,y \in \mathbb{N}\cup\{0\} \\ 1/2 &{\text{if }}x=y\ne 0 \qquad \qquad \quad \ x,y \in \mathbb{N}\cup\{0\} \\0&{\text{if }}x>y  \ {\text{or }}x=y=0, \quad x,y\in \mathbb{N}\cup\{0\} \end{cases}} \label{d1b}
\end{equation}

The following identity, called the splitting property, for any $x, y, z \in \mathbb{R}$
\begin{equation}
\mathcal{B}(x,y+z)=\mathcal{B}(x+y,z)+\mathcal{B}(x-z,y) \label{splito} 
\end{equation}
is simple to prove
\begin{eqnarray}
&&\mathcal{B}(x+y,z)+\mathcal{B}(x-z,y)=\frac{1}{2}\left(\text{sign}(x+y+z)-\text{sign}(x+y-z)\right)+\nonumber \\
&&+\frac{1}{2}\left(\text{sign}(x-z+y)-\text{sign}(x-z-y)\right)=\frac{1}{2}\left(\text{sign}(x+y+z)-\text{sign}(x-y-z)\right) \nonumber \\
&&=\mathcal{B}(x,y+z) \nonumber
 \nonumber \qedhere
\end{eqnarray}

The $\mathcal{B}$-function allows any \emph{inequality} of the form $r<x<q$ involving the real numbers $r$ and $q$ ($r<q$) to be replaced by an \emph{identity}
\begin{equation}
\mathcal{B}\left(x-\frac{q+r}{2},\frac{q-r}{2}\right)=1 \qquad \text{(i.e)} \qquad 
\frac{1}{2}\left(\frac{x-r}{|x-r|}-\frac{x-q}{|x-q|}\right) =1 \label{interesting1}
\end{equation}

This observation can be further generalized to other kinds of inequalities. We note that, by definition, the $\mathcal{B}$-function, as defined above, returns $\pm 1/2$ at the borders, the sign depending on the second argument. If we are interested that the $\mathcal{B}$-function is equal to $\pm 1$ or $0$ at the borders, we can, instead, use the functions $\mathcal{B}_{++}(x,y)$, $\mathcal{B}_{--}(x,y)$, $\mathcal{B}_{+-}(x,y)$ and $\mathcal{B}_{-+}(x,y)$ defined as follows
\begin{eqnarray}
\mathcal{B}_{++}(x,y)&\equiv& \mathcal{B}(0,\mathcal{B}(x,y))={\begin{cases} 1&{\text{if }}x\in [-y,y], y>0\\-1&{\text{if }}x\in [y,-y], y<0\\0&{\text{if }}x\notin [-|y|,|y|] \end{cases}} \label{bclos} \\
\mathcal{B}_{--}(x,y)&\equiv& 2\mathcal{B}(x,y)-\mathcal{B}_{++}(x,y)={\begin{cases} 1&{\text{if }}x\in (-y,y), y>0\\-1&{\text{if }}x\in (y,-y), y<0\\0&{\text{if }}x\notin (-|y|,|y|) \end{cases}} \label{bope} \\
\mathcal{B}_{-+}(x,y)&\equiv& \mathcal{B}\left(0,|x+y|\mathcal{B}(x,y)\right)={\begin{cases} 1&{\text{if }}x\in (-y,y], y>0\\-1&{\text{if }}x\in [y,-y), y<0\\0&{\text{if }}x\notin (-|y|,|y|] \end{cases}} \label{bminplus} \\
\mathcal{B}_{+-}(x,y)&\equiv& \mathcal{B}\left(0,|x-y|\mathcal{B}(x,y)\right)={\begin{cases} 1&{\text{if }}x\in [-y,y), y>0\\-1&{\text{if }}x\in (y,-y], y<0\\0&{\text{if }}x\notin [-|y|,|y|) \end{cases}} \label{bplusmin}
 \end{eqnarray} 
Thus, for example, the characteristic function $\text{Set}(x; [a,b])$ of the closed interval $[a,b]$ ($a<b$) is
\begin{equation}
\text{Set}(x; [a,b])=\mathcal{B}_{++}\left(x-\frac{b+a}{2},\frac{b-a}{2} \right)
\end{equation}
As another example, the characteristic function $\text{Set}(x,y; x^{2}+y^{2}<R)$ of an open disk containing all those points $(x,y)$ in the plane $\mathbb{R}^{2}$ for which $x^{2}+y^{2}<R^{2}$ is
\begin{equation}
\text{Set}(x,y;\ x^{2}+y^{2}<R^{2})=\mathcal{B}_{--}\left(x^{2}+y^{2}, R^{2} \right) \label{opendisk}
\end{equation}

\section{Characteristic functions of collections of sets} \label{core}

We now prove the following theorem which, together with its consequences, constitutes the main result of this article.

\begin{theor} \label{Utheo} Let $U$ denote the universe of discourse, let $x\in U$ be an element of $U$, and let a collection of $n$ sets included in $U$ be denoted by $S^{(1)}$,$\ldots$, $S^{(n)}$. The characteristic functions $\text{\emph{Set}}\left(x; S_{= m}\right)$, $\text{\emph{Set}}\left(x; S_{\le m}\right)$ and $\text{\emph{Set}}\left(x; S_{> m}\right)$ of the elements $x$ that \emph{exactly} belong to $m$ sets, to \emph{no more than $m$} sets and to \emph{more than} $m$ sets of the collection are, respectively, given by 
\begin{eqnarray}
\text{\emph{Set}}\left(x; S_{= m}\right)&=&\mathcal{B}\left(m-\sum_{j=1}^{n}\text{\emph{Set}}\left(x; S^{(j)}\right), \varepsilon \right)  \qquad \forall \varepsilon \in \left.\left(0,\frac{1}{2}\right.\right], \varepsilon \in \mathbb{R} \label{subsets1} \\
\text{\emph{Set}}\left(x; S_{\le m}\right)&=&\mathcal{B}\left(m+1-2\sum_{j=1}^{n}\text{\emph{Set}}\left(x; S^{(j)}\right), m \right)  \label{subsets2} \\
\text{\emph{Set}}\left(x; S_{> m}\right)&=&\mathcal{B}\left(n+m+1-2\sum_{j=1}^{n}\text{\emph{Set}}\left(x; S^{(j)}\right), n-m\right) \label{subsets3}
\end{eqnarray}
\end{theor}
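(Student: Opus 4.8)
The plan is to reduce every assertion to an elementary statement about the single integer-valued quantity
\[
N(x) \;\equiv\; \sum_{j=1}^{n}\text{Set}\left(x; S^{(j)}\right),
\]
the number of sets of the collection that contain the fixed element $x$. Because each summand is $0$ or $1$, the quantity $N(x)$ is an integer with $0\le N(x)\le n$, and the three sets are governed entirely by conditions on it: the element $x$ lies in $S_{=m}$ exactly when $N(x)=m$, in $S_{>m}$ exactly when $N(x)>m$, and in $S_{\le m}$ exactly when $1\le N(x)\le m$, i.e. when $x$ belongs to the union but to no more than $m$ of the sets. Once this reduction is in place, each identity is checked by evaluating the relevant $\mathcal{B}$-function, and the integrality of $N(x)$ is what does all the work.

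For Eq. (\ref{subsets1}) the argument $m-N(x)$ is an integer, so Eq. (\ref{d1Kro}) applies verbatim: for any $\varepsilon\in\left.\left(0,\frac{1}{2}\right.\right]$ one has $\mathcal{B}\!\left(m-N(x),\varepsilon\right)=\delta_{(m-N(x)),0}$, which is $1$ precisely when $N(x)=m$ and $0$ otherwise. This is the indicator of $S_{=m}$, and in particular $m=0$ correctly returns the complement of the union. For Eqs. (\ref{subsets2}) and (\ref{subsets3}) I would first record two properties immediate from the sign-function form in Eq. (\ref{d1}): $\mathcal{B}$ is even in its first argument, $\mathcal{B}(-a,b)=\mathcal{B}(a,b)$, and invariant under positive rescaling, $\mathcal{B}(\lambda a,\lambda b)=\mathcal{B}(a,b)$ for $\lambda>0$. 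Applying the evenness followed by the rescaling $\lambda=\frac{1}{2}$ rewrites the right-hand side of Eq. (\ref{subsets2}) as $\mathcal{B}\!\left(N(x)-\frac{m+1}{2},\frac{m}{2}\right)$, which is exactly the canonical form of Eq. (\ref{interesting1}) with $r=\frac{1}{2}$ and $q=m+\frac{1}{2}$; hence the value is $1$ iff $\frac{1}{2}<N(x)<m+\frac{1}{2}$. The same two moves turn the right-hand side of Eq. (\ref{subsets3}) into the canonical form with $r=m+\frac{1}{2}$ and $q=n+\frac{1}{2}$, so its value is $1$ iff $m+\frac{1}{2}<N(x)<n+\frac{1}{2}$.

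The final and most delicate step is to pass from these open real intervals back to the discrete membership conditions. Since $N(x)$ is an integer, $\frac{1}{2}<N(x)<m+\frac{1}{2}$ is equivalent to $1\le N(x)\le m$, and $m+\frac{1}{2}<N(x)<n+\frac{1}{2}$ is equivalent to $N(x)>m$, as required. The subtlety I expect to be the main obstacle is the endpoint behaviour: Eq. (\ref{d1}) returns $\pm\frac{1}{2}$ rather than $0$ or $1$ when the magnitudes of its two arguments coincide, and one must be sure this degenerate case never occurs. The reason it does not is precisely that the interval endpoints $r,q$ are half-integers while $N(x)$ is an integer, so $|a|=|b|$ is never attained and the $\mathcal{B}$-function takes only the values $0$ and $1$; the positivity of the second arguments $m$ and $n-m$ moreover fixes the non-zero value to be $+1$ rather than $-1$. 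It then remains only to dispose of the boundary instances $m=0$ in Eq. (\ref{subsets2}) and $m=n$ in Eq. (\ref{subsets3}), where the second argument of $\mathcal{B}$ vanishes and the set in question is empty; these are consistent because $\mathcal{B}(a,0)=0$ for every $a$.
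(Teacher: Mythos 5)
Your proof is correct and follows essentially the same route as the paper's: both reduce each identity to the statement that the integer count $N(x)=\sum_{j=1}^{n}\text{Set}\left(x;S^{(j)}\right)$ must lie in the open interval on which the corresponding $\mathcal{B}$-function equals $1$, and then use integrality of $N(x)$ to read off the membership condition. Your normalization to the canonical form of Eq.~(\ref{interesting1}) and the half-integer-endpoint observation are a slightly cleaner packaging of what the paper does by explicitly checking that the boundary cases ($k=0$, $k>m$, etc.) return $0$, but the underlying argument is the same.
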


\begin{proof}
From Eq. (\ref{d1}), we have that $\mathcal{B}\left(m-\sum_{j=1}^{n}\text{Set}\left(x; S^{(j)}\right), \varepsilon \right)=1$, for any $\varepsilon \in \left.\left(0,\frac{1}{2}\right.\right], \varepsilon \in \mathbb{R}$, only if
\begin{equation}
-\varepsilon<m-\sum_{j=1}^{n}\text{Set}\left(x; S^{(j)}\right)<\varepsilon \nonumber
\end{equation}
We note that since both $m$ and $\sum_{j=1}^{n}\text{Set}\left(x; S^{(j)}\right)$ are non-negative integers, the only possibility to satisfy this inequality is
\begin{equation}
\sum_{j=1}^{n}\text{Set}\left(x; S^{(j)}\right)=m \nonumber
\end{equation}
which implies that $x$ exactly belongs to $m$ sets. Otherwise, $\mathcal{B}\left(m-\sum_{j=1}^{n}\text{Set}\left(x; S^{(j)}\right), \frac{1}{2} \right)=0$. The same conclusions are trivially reached from Eq. (\ref{d1Kro}) since the $\mathcal{B}$-function reduces to a Kronecker delta of the first argument in this case. Therefore, Eq. (\ref{subsets1}) follows.

Eq. (\ref{subsets2}) is proved in a similar way. We have $\mathcal{B}\left(m+1-2\sum_{j=1}^{n}\text{Set}\left(x; S^{(j)}\right), m \right)=1$ if
\begin{equation}
-m<m+1-2\sum_{j=1}^{n}\text{Set}\left(x; S^{(j)}\right)<m \nonumber
\end{equation}
and zero otherwise. Let $\sum_{j=1}^{n}\text{Set}\left(x; S^{(j)}\right)=k$. The inequality is then satisfied if 
\begin{equation}
0<1+2(m-k)<2m \nonumber
\end{equation}
i.e. if $1 \le k \le m$. This means that $x$ belongs to, at least, one set of the collection, and to no more than $m$ sets. If $k=0$ we have $\mathcal{B}\left(m+1-2\sum_{j=1}^{n}\text{Set}\left(x; S^{(j)}\right), m \right)=\mathcal{B}\left(m+1, m \right)=0$ and if $k>m$, say $k=m+D$ with $D\ge 1$, we obtain $\mathcal{B}\left(m+1-2k, m \right)=\mathcal{B}\left(-m-2D+1, m \right)=\mathcal{B}\left(m+2D-1, m \right)=0$ (since $m+2D-1>m$). Therefore, the characteristic function $\text{Set}\left(x; S_{\le m}\right)$ is given by Eq. (\ref{subsets2}).

Finally, to prove Eq. (\ref{subsets3}) note that, if $\mathcal{B}\left(n+m+1-2\sum_{j=1}^{n}\text{Set}\left(x; S^{(j)}\right), n-m\right)=1$, then
\begin{equation}
-n+m<  n+m+1-2\sum_{j=1}^{n}\text{Set}\left(x; S^{(j)}\right)     <n-m  \nonumber
\end{equation}
Let $\sum_{j=1}^{n}\text{Set}\left(x; S^{(j)}\right)=k$. Thus,
\begin{equation}
0<  2(n-k)+1    <2(n-m) \qquad \to \qquad -2(n-m)<2m+1<2k \qquad \to \qquad m<k  \nonumber
\end{equation}
This means that $\sum_{j=1}^{n}\text{Set}\left(x; S^{(j)}\right)>m$, i.e. that $x$ belongs to more than $m$ sets of the collection. If this is not the case, $\mathcal{B}\left(n+m+1-2\sum_{j=1}^{n}\text{Set}\left(x; S^{(j)}\right), n-m\right)=0$, since then $\sum_{j=1}^{n}\text{Set}\left(x; S^{(j)}\right)=k<m$ implies $n+m+1-2\sum_{j=1}^{n}\text{Set}\left(x; S^{(j)}\right)=n+m+1-2k>n+m+1-2m=n-m+1>n-m$. Therefore, the characteristic function $\text{Set}\left(x; S_{> m}\right)$ is given by Eq. (\ref{subsets3}), as claimed.
\end{proof}

\begin{corol} \label{inex} The characteristic function $ \text{\emph{Set}}\left(x; \bigcup_{j=1}^n S^{(j)}\right)$ of the union of $n$ sets $S^{(1)}$,$\ldots$, $S^{(n)}$ is
\begin{eqnarray}
 \text{\emph{Set}}\left(x; \bigcup_{j=1}^n S^{(j)}\right)  &=& \mathcal{B}\left(n+1-2\sum_{j=1}^{n}\text{\emph{Set}}\left(x; S^{(j)}\right),\ n\right)  \label{mainU} \\
 &=&\frac{1}{2}\left(\frac{2n+1-2\sum_{j=1}^{n}\text{\emph{Set}}\left(x; S^{(j)}\right)}{|2n+1-2\sum_{j=1}^{n}\text{\emph{Set}}\left(x; S^{(j)}\right)|}-\frac{1-2\sum_{j=1}^{n}\text{\emph{Set}}\left(x; S^{(j)}\right)}{|1-2\sum_{j=1}^{n}\text{\emph{Set}}\left(x; S^{(j)}\right)|}\right) \nonumber
\end{eqnarray}
\end{corol}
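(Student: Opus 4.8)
The plan is to recognize the union $\bigcup_{j=1}^n S^{(j)}$ as precisely the collection of elements that belong to \emph{at least one} of the $n$ sets, and then read off the result as an immediate specialization of Theorem \ref{Utheo}. An element $x$ lies in the union if and only if it belongs to more than zero of the sets, so in the notation of the theorem the union coincides with $S_{>0}$, that is
\begin{equation}
\text{Set}\left(x; \bigcup_{j=1}^n S^{(j)}\right) = \text{Set}\left(x; S_{>0}\right). \nonumber
\end{equation}

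First I would substitute $m=0$ into Eq. (\ref{subsets3}). This yields at once
\begin{equation}
\text{Set}\left(x; S_{>0}\right) = \mathcal{B}\left(n+1-2\sum_{j=1}^{n}\text{Set}\left(x; S^{(j)}\right),\ n\right), \nonumber
\end{equation}
which is the first line of Eq. (\ref{mainU}). Equivalently, one may substitute $m=n$ into Eq. (\ref{subsets2}): since every element belongs to no more than $n$ sets, the set $S_{\le n}$ (which, by the proof of Eq. (\ref{subsets2}), collects the elements with $1\le k\le n$) also coincides with the union, and this route gives the identical expression. This double derivation provides a useful internal consistency check.

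To obtain the second line, I would expand the $\mathcal{B}$-function through its definition, Eq. (\ref{d1}), taking first argument $n+1-2\sum_{j=1}^{n}\text{Set}(x; S^{(j)})$ and second argument $n$. The two shifted arguments appearing in Eq. (\ref{d1}) become $2n+1-2\sum_{j=1}^{n}\text{Set}(x; S^{(j)})$ and $1-2\sum_{j=1}^{n}\text{Set}(x; S^{(j)})$, and inserting these into $\tfrac{1}{2}(\text{sign}(x+y)-\text{sign}(x-y))$ reproduces verbatim the explicit fraction form stated in Eq. (\ref{mainU}).

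I do not expect any genuine obstacle: the whole content is the set-theoretic identification of the union with $S_{>0}$ (equivalently with $S_{\le n}$), after which the formula is a direct instance of the theorem. The only point meriting a line of care is that the non-negative integer $\sum_{j=1}^{n}\text{Set}(x; S^{(j)})$ must never spuriously meet the defining strict inequality at a boundary; but this is already secured by the proof of Eq. (\ref{subsets3}), which established that the $\mathcal{B}$-function returns exactly $1$ when $k>0$ and $0$ otherwise, with no half-integer edge cases arising for integer arguments.
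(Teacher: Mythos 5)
Your proposal is correct and follows essentially the same route as the paper: the corollary is obtained by setting $m=0$ in Eq. (\ref{subsets3}) (or, equivalently, $m=n$ in Eq. (\ref{subsets2})), and the second line is just the definition Eq. (\ref{d1}) with the stated arguments. Your explicit identification of the union with $S_{>0}$ and the check of the shifted arguments $2n+1-2\sum_{j}\text{Set}(x;S^{(j)})$ and $1-2\sum_{j}\text{Set}(x;S^{(j)})$ simply spell out steps the paper leaves implicit.
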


\begin{proof} The result follows by taking $m=n$ in Eq. (\ref{subsets2}) or $m=0$ in Eq. (\ref{subsets3}). The second of the identities just results from the definition of the $\mathcal{B}$-function, Eq. (\ref{d1}).
\end{proof}

\begin{corol} \label{inex2} 
\begin{equation}
 \text{\emph{Set}}\left(x; S^{(j)}\right)  = \mathcal{B}\left(2-2\text{\emph{Set}}\left(x; S^{(j)}\right),\ 1\right) \label{mainUdos}
\end{equation}
\end{corol}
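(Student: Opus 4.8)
The plan is to recognize Eq.~(\ref{mainUdos}) as nothing more than the $n=1$ instance of the union formula already proved in Corollary~\ref{inex}. Since the union of a single set is that set itself, $\bigcup_{j=1}^{1}S^{(j)}=S^{(1)}$, setting $n=1$ in Eq.~(\ref{mainU}) collapses the sum $\sum_{j=1}^{n}\text{Set}(x;S^{(j)})$ to the single term $\text{Set}(x;S^{(1)})$ and turns the first argument $n+1-2\sum$ into $2-2\,\text{Set}(x;S^{(1)})$ while the second argument $n$ becomes $1$. This immediately yields $\text{Set}(x;S^{(1)})=\mathcal{B}\!\left(2-2\,\text{Set}(x;S^{(1)}),\,1\right)$. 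Equivalently, one may invoke Eq.~(\ref{subsets2}) with $n=m=1$, observing that for a single set the elements belonging to at least one and to no more than one set are precisely the elements of $S^{(1)}$, so that $\text{Set}(x;S_{\le 1})=\text{Set}(x;S^{(1)})$.

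For completeness I would also record the fully self-contained verification, which is a two-case check. Writing $s\equiv\text{Set}(x;S^{(1)})\in\{0,1\}$, the arguments $2-2s$ and $1$ are non-negative integers, so the relevant branch of the $\mathcal{B}$-function is the one given in Eq.~(\ref{d1b}). If $x\in S^{(1)}$ then $s=1$ and $2-2s=0$, whence $\mathcal{B}(0,1)=1=s$ because $0<1$. If $x\notin S^{(1)}$ then $s=0$ and $2-2s=2$, whence $\mathcal{B}(2,1)=0=s$ because $2>1$. In both admissible cases the claimed identity holds exactly.

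There is no genuine obstacle here; the only point requiring care is to invoke the correct piece of the piecewise definition of $\mathcal{B}$. Because $s$ is integer-valued, the first argument equals either $0$ or $2$ and never coincides with the second argument $1$, so the boundary case $x=y$ of Eq.~(\ref{d1b}) never arises and no half-integer value is produced. Thus the equality is sharp, and the corollary follows at once.
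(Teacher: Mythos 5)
Your proposal is correct and follows essentially the same route as the paper, which also simply sets $n=1$ in Eq.~(\ref{mainU}) (adding only the remark that the label $j$ is arbitrary). Your supplementary two-case verification via Eq.~(\ref{d1b}) is a sound bonus check but not a different method.
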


\begin{proof} Take $n=1$ in Eq. (\ref{mainU}) and note that $S^{(1)}$ may indeed refer to any set $S^{(j)}$ in the union since the $n$ different values of the integer label $j$ can be freely attributed. 
\end{proof}


It is also easy to prove that
 \begin{equation}
\text{Set}\left(x; \bigcup_{j=1}^n S^{(j)}\right)=\text{Set}\left(x; S_{\le m}\right)+\text{Set}\left(x; S_{> m}\right)=\sum_{m=1}^{n}\text{Set}\left(x; S_{=m}\right)
\end{equation}
since, from Eq. (\ref{mainU}) and by the splitting property, Eq. (\ref{splito})
 \begin{eqnarray}
&&\text{Set}\left(x; \bigcup_{j=1}^n S^{(j)}\right)=\mathcal{B}\left(n+1-2\sum_{j=1}^{n}\text{Set}\left(x; S^{(j)}\right),n \right) \nonumber \\
&&=\mathcal{B}\left(m+1-2\sum_{j=1}^{n}\text{Set}\left(x; S^{(j)}\right), m\right) + \mathcal{B}\left(n+m+1-2\sum_{j=1}^{n}\text{Set}\left(x; S^{(j)}\right),n-m\right)
 \nonumber \\ 
 &&= \text{Set}\left(x; S_{\le m}\right)+\text{Set}\left(x; S_{> m}\right) \nonumber 
\end{eqnarray}
The last identity $\text{Set}\left(x; S_{\le m}\right)+\text{Set}\left(x; S_{> m}\right)=\sum_{m=1}^{n}\text{Set}\left(x; S_{=m}\right)$ is proved by induction. For $n=1$ it is trivially true. Let us assume it valid for a collection of $n-1$ sets. Then, for $n$ sets we have $\text{Set}\left(x; \bigcup_{j=1}^n S^{(j)}\right)=\text{Set}\left(x; S_{\le n-1}\right)+\text{Set}\left(x; S_{> n-1}\right)=\text{Set}\left(x; S_{\le n-1}\right)+\text{Set}\left(x; S_{=n}\right)=\sum_{m=1}^{n-1}\text{Set}\left(x; S_{=m}\right)+\text{Set}\left(x; S_{=n}\right)=\sum_{m=1}^{n}\text{Set}\left(x; S_{=m}\right)$.

\section{Discussion} \label{discu}

The simplicity of our expression for the characteristic function Eq. (\ref{mainU}), compared to Eq. (\ref{inextrape}), is apparent: It only makes use of the characteristic functions of the individual sets $\text{Set}\left(x; S^{(j)}\right)$ and it thus depends only on $n$ terms. Furthermore, although our characteristic function is defined through the $\mathcal{B}$-function that is a nonlinear function, the two arguments of the latter change linearly as more sets are being added to the union. For each set $A$ that joins a previously formed union of sets, it is only necessary to add $1-2\text{Set}\left(x; A\right)$, to the first argument of the $\mathcal{B}$-function and `1' to the second one. Let us illustrate this process by starting with the empty set that can be represented with the characteristic function
\begin{equation}
\text{Set}\left(x; \emptyset \right)=\mathcal{B}(1,0)=0 
\end{equation}
if we now consider only one set $S^{(1)}$ and, hence, $n=1$ then, by adding `$1-2\text{Set}\left(x; S^{(1)}\right)$' to the first argument of the $\mathcal{B}$-function and `1' to the second argument we obtain
\begin{equation}
 \text{Set}\left(x; S^{(1)}\right)  = \mathcal{B}\left(1+1-2\text{Set}\left(x; S^{(1)}\right),\ 0+1\right)=\mathcal{B}\left(2-2\text{Set}\left(x; S^{(1)}\right),\ 1\right) \label{mainUdos2}
\end{equation} 
which is Eq. (\ref{mainUdos}). If another set $S^{(2)}$ is joined we just add $1-2\text{Set}\left(x; S^{(2)}\right)$ to the first argument and `1' to the second argument of the $\mathcal{B}$-function in Eq. (\ref{mainUdos2}) to obtain
\begin{eqnarray}
 \text{Set}\left(x; S^{(1)}\cup S^{(2)} \right) &=&\mathcal{B}\left(2-2\text{Set}\left(x; S^{(1)}\right)+1-2\text{Set}\left(x; S^{(2)}\right),\ 1+1\right) \nonumber \\
 &=&\mathcal{B}\left(3-2\left[\text{Set}\left(x; S^{(1)}\right)+\text{Set}\left(x; S^{(2)}\right)\right],\ 2\right)
\end{eqnarray} 
This is Eq. (\ref{mainU}) with $n=2$. This process can go on indefinitely. Note how its complexity increases linearly with $n$ compared to the complexity of Eq. (\ref{inextrape}) that increases as $2^{n}-1$.

\begin{figure*}[!h]
\centering\includegraphics[width=3.5in,angle=270]{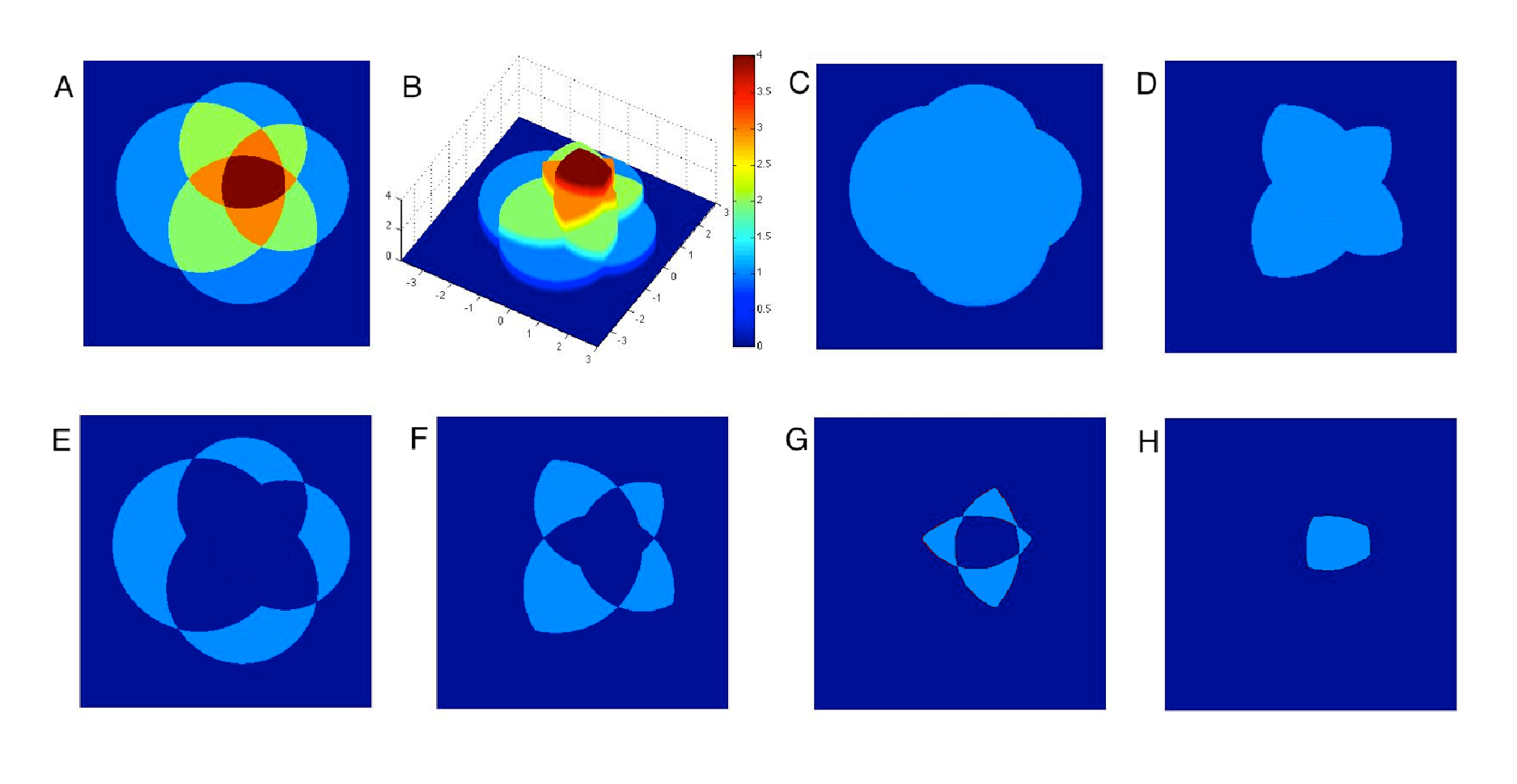}
\caption{Plot of the function $\sum_{j=1}^{4}\text{Set}\left(z; S^{(j)}\right)$ for $\text{Set}\left(z; S^{(j)}\right)$, $j=1,\ldots 4$ given by Eqs. (\ref{S1}) to (\ref{S4}) in the plane (A) and in three dimensions (B). For the same collection of sets the following characteristic functions are plotted: $ \text{ Set}\left(z; \bigcup_{j=1}^4 S^{(j)}\right)$ (C), calculated from Eq. (\ref{mainU}); $ \text{ Set}\left(x; S_{> 1}\right)$ (D), calculated from Eq. (\ref{subsets3}), and $ \text{Set}\left(x; S_{=m}\right)$, calculated from Eq. (\ref{subsets1}) for $m=1$ (E), $m=2$ (F), $m=3$ (G) and $m=4$ (H). Shown is, in every case, the square region of the plane comprised between the points $(-3.75 ,-3.75  )$ and $( 3, 3)$.}
\label{setas}
\end{figure*}

Our characteristic function also admits a straightforward geometric interpretation as illustrated with the following example. Let us, e.g. consider as the universe $U$ the Euclidean plane $\mathbb{R}^{2}$. Thus, an element $z$ of $U$ is any pair of coordinates $z=(x,y)$ that specify a point in the plane. We consider the following sets
\begin{eqnarray}
\text{Set}(z; S^{(1)})&\equiv &\text{Set}\left(x,y;\ (x-1)^{2}+y^{2}<1.5^{2}\right)=\mathcal{B}_{--}\left((x-1)^{2}+y^{2}, 2.25 \right) \label{S1} \\
\text{Set}(z; S^{(2)})&\equiv &\text{Set}\left(x,y;\ (x+1)^{2}+y^{2}<2^{2}\right)=\mathcal{B}_{--}\left((x+1)^{2}+y^{2}, 4 \right) \label{S2} \\
\text{Set}(z; S^{(3)})&\equiv &\text{Set}\left(x,y;\ x^{2}+(y-1)^{2}<1.5^{2}\right)=\mathcal{B}_{--}\left(x^{2}+(y-1)^{2}, 2.25 \right) \label{S3} \\
\text{Set}(z; S^{(4)})&\equiv &\text{Set}\left(x,y;\ x^{2}+(y+1)^{2}<1.75^{2}\right)=\mathcal{B}_{--}\left(x^{2}+(y+1)^{2}, 3.0625 \right) \label{S4} 
\end{eqnarray} 
i.e.  a collection of four open disks, as defined in Eq. (\ref{opendisk}), in the Euclidean plane. We can, from these definitions explicitly calculate all characteristic functions in Theorem \ref{Utheo} for points $(x,y)$ in the plane, and plot them in 3D as surfaces. It is first useful to consider the function $\sum_{j=1}^{4}\text{Set}\left(z; \bigcup_{j=1}^n S^{(j)}\right)$. Since there are 4 sets, this function is integer valued and takes values on the interval $[0,4]$. The larger the value of $\sum_{j=1}^{4}\text{Set}\left(z; \bigcup_{j=1}^n S^{(j)}\right)$, the larger the number of sets to which $z$ does belong. In Fig. \ref{setas}A, we plot this function in the plane. We observe that the four disks have regions of intersection and, as a result, the function resembles a colored Venn diagram in the plane, the colors ranging from deep blue to deep red as $\sum_{j=1}^{4}\text{Set}\left(z; \bigcup_{j=1}^n S^{(j)}\right)$ ranges from $0$ to $4$. In Fig. \ref{setas}B we plot this function in 3D and it becomes clear that those points that belong to a larger number of sets have a larger height over the plane. The function resembles a `tower' and the $\mathcal{B}$-function can then be used at different levels to cut this tower into pieces of unit height. The resulting pieces are sets whose characteristic functions are, precisely, the ones obtained from Theorem \ref{Utheo}. The union of the four sets above $ \text{ Set}\left(z; \bigcup_{j=1}^4 S^{(j)}\right)$ as obtained from Eq. (\ref{mainU}) is shown in Fig. \ref{setas}C. In the rest of the panels of the figure, the following characteristic functions are shown: $ \text{ Set}\left(x; S_{> 1}\right)$ (D), calculated from Eq. (\ref{subsets3}), and $ \text{Set}\left(x; S_{=m}\right)$, calculated from Eq. (\ref{subsets1}) for $m=1$ (E), $m=2$ (F), $m=3$ (G) and $m=4$ (H). We see that, in every case, the $\mathcal{B}$-function allows to extract pieces from the range of integers $\in [0,4]$ of the mere superposition of the characteristic functions $ \text{ Set}\left(z; \bigcup_{j=1}^4 S^{(j)}\right)$, projecting the result to the set $\{0,1\}$ and allowing the result to be interpreted in terms of the membership $\in$ relation. The $\mathcal{B}$-function thus produces a `tomography' of any linear superposition of characteristic functions of sets.

As another example of the application of Theorem \ref{Utheo}, let us consider as universe $U$ the natural numbers, and the set of those natural numbers $j\in \mathbb{N}$ that divide $x\in U$, i.e. those $j$ such that $j|x$. Its characteristic function is given by
\begin{equation}
\text{Set}(j ; j|x)\equiv \mathcal{B}\left(\mathbf{d}_j(0,x), \frac{1}{2} \right) \label{divide}
\end{equation}
where the zeroth digit function \cite{CHAOSOLFRAC,PHYSAFRAC}
\begin{equation}
\mathbf{d}_{j}(0,x)\equiv x-j\left \lfloor \frac{x}{j} \right \rfloor \nonumber
\end{equation}
returns the remainder of dividing $x$ by $j$. Here the brackets $\left \lfloor \ldots \right \rfloor$ denote the integer part (floor) function. Clearly, the $\mathcal{B}$-function  in Eq. (\ref{divide}) returns 1 if the remainder is zero and 0 otherwise. 

Thus, since a number $j > \left \lfloor x/2 \right \rfloor$ cannot be a proper divisor of $x$, the total number of proper divisors of $x$ is given by
\begin{equation}
\sum_{j=2}^{\left \lfloor x/2 \right \rfloor} \text{Set}(j ; j|x)= \sum_{j=2}^{\left \lfloor x/2 \right \rfloor}\mathcal{B}\left(\mathbf{d}_j(0,x), \frac{1}{2} \right)  \label{prime2}
\end{equation}
From Eq. (\ref{subsets1}) we have that the set of all those natural numbers $x$ with exactly $m$ proper divisors has characteristic function
\begin{equation}
\mathcal{B}\left(m-\sum_{j=2}^{\left \lfloor x/2 \right \rfloor}\mathcal{B}\left(\mathbf{d}_j(0,x), \frac{1}{2} \right), \frac{1}{2} \right)
\end{equation}
and, therefore, the characteristic function for the set of all prime numbers is obtained from this latter expression by taking $m=0$ as
\begin{equation}
\text{Set}\left(x; \text{Prime}\right) = \mathcal{B}\left(\sum_{j=2}^{\left \lfloor x/2 \right \rfloor} \mathcal{B}\left(\mathbf{d}_j(0,x), \frac{1}{2} \right), \frac{1}{2} \right)  \label{bsetprime}
\end{equation}

The total number of primes less or equal than $N\in \mathbb{N}$ is given by
\begin{equation}
\text{Card}\left(\text{Prime}\le N\right)= \sum_{x=2}^{N}\text{Set}\left(x; \text{Prime}\right)=\sum_{x=2}^{N}\mathcal{B}\left(\sum_{j=2}^{\left \lfloor x/2 \right \rfloor} \mathcal{B}\left(\mathbf{d}_j(0,x), \frac{1}{2}\right), \frac{1}{2}\right)  \label{Noprimes}
\end{equation}

\section{Conclusion} \label{conclu}

In this article, formulae have been established for characteristic functions of a collection of $n$ sets which are both simple and powerful (enjoying some nice algebraic properties). This leads to a major simplification of the algebra of logic, from a traditional expression, as Eq. (\ref{inextrape}), that contain up to $2^{n}-1$ terms, to our formula, Eq. (\ref{mainU}), which involves just $n$ terms only. No detailed knowledge of the intersections of the $n$ individual sets in the collection is required to evaluate our formulae: The sets are here handled as if they were disjoint, but their mutual overlaps(non-empty intersections) are appropriately accounted for by means of a $\mathcal{B}$-function which is able to discriminate between different layers of sets (degrees of overlapping). The $\mathcal{B}$-function has nice additive properties: If more sets are added to the collection, this is reflected on a mere linear change of the two arguments on which the $\mathcal{B}$-function depends. This behavior is linked to the splitting property of the $\mathcal{B}$-function Eq. (\ref{splito}). Furthermore, we have suggested how the $\mathcal{B}$-function can itself be regarded as a building block to construct characteristic functions of arbitrarily complex sets. 

In a recent paper \cite{JPHYSA} we have introduced a one-parameter family of functions called $\mathcal{B}_{\kappa}$-functions. The latter become the $\mathcal{B}$-function in the proper limit $\kappa \to 0$ of the real-valued parameter $\kappa$ and, for finite and nonvanishing $\kappa$, they constitute a smooth version of the $\mathcal{B}$-function. It turns out that $\mathcal{B}_{\kappa}$-functions also have the splitting property \cite{JPHYSA} and many other properties of the $\mathcal{B}$-function. Following \cite{JPHYSA}, all expressions found here where a $\mathcal{B}$-function appears, can be approximated by replacing those $\mathcal{B}$-functions by $\mathcal{B}_{\kappa}$-functions, thus smoothing the former and embedding them in the field of the real numbers. Therefore, all expressions coming from our main result, Theorem \ref{Utheo}, can be appropriately smoothed to obtain real-valued, `fuzzy' \cite{Zadeh} counterparts. It is thus straightforward to derive, through $\mathcal{B} \to \mathcal{B}_{\kappa}$ replacements, fuzzy versions of further set-theoretic and logical notions as the ones discussed in this paper. These possibilities, and the implications that these expressions may have in probability theory, shall be investigated elsewhere.

%
%
%
%
%
%
%


\end{document}